\DeclareMathAlphabet{\mymathbb}{U}{BOONDOX-ds}{m}{n}
 \newtheorem{thrm}{Theorem}
\newtheorem{lem}{Lemma}
\newtheorem{prop}{Proposition}
\newtheorem{defn}{Definition}
\newtheorem{rmk}{Remark}
\newcommand{\yo}[1]{{\color{black}{#1}}}
\newcommand{\yl}[1]{{\color{black}{#1}}}
\newcommand{\kl}{\textcolor{black}}
\newcommand{\kkl}{\textcolor{black}}
\newcommand{\khl}{\textcolor{black}}
\newcommand{\nkl}{\textcolor{black}}
\newcommand{\Nkl}{\textcolor{black}}
\newcommand{\il}{\textcolor{black}}
\newcommand{\ill}{\textcolor{black}}
\newcommand{\illl}{\textcolor{black}}
\newcommand{\ic}{\textcolor{black}}
\newcommand{\yy}[1]{{\color{black}{#1}}}
\newcommand{\cj}{\textcolor{black}}
\newcommand{\jo}[1]{{\color{black}{#1}}}
\newcommand{\ym}[1]{{\color{black}{#1}}}
\newcommand{\lk}{\textcolor{black}}
\newcommand{\li}{\textcolor{black}}
\newcommand{\lic}{\textcolor{black}}
\newcommand{\licc}{\textcolor{black}}
\newcommand{\lli}{\textcolor{black}}
\newcommand{\yr}[1]{{\color{black}{#1}}}
\newcommand{\khlb}{\textcolor{black}}
\newcommand{\ir}[1]{{\color{black}{#1}}}
\newcommand{\irr}[1]{{\color{black}{#1}}}
\newcommand{\icr}[1]{{\color{black}{#1}}}
\title{\jo{On the synchronization of the Kuramoto-type model of oscillators with lossy  couplings}}
\author{
	Yemi Ojo,  Khaled Laib and Ioannis Lestas   
	\thanks{This work was supported by ERC starting grant 679774.}
	\thanks{The authors are with the Department of Engineering, University of Cambridge, Trumpington Street, Cambridge, CB2 1PZ, United Kingdom. Emails: \{yo259,  kl507, icl20\}@cam.ac.uk}
}
\begin{document}

	\maketitle
	\thispagestyle{empty}
	\pagestyle{empty}

	\begin{abstract}		

		\jo{We consider the problem of  synchronization of coupled oscillators in a Kuramoto-type model with lossy couplings. 				
			Kuramoto models have been used to gain insight on the stability of power networks which are usually nonlinear and \cj{involve large scale interconnections.} 
\li{Such models commonly assume lossless couplings and  Lyapunov functions} 		
		 \li{have predominantly been} employed to prove stability.
		However,   coupling conductances   can impact synchronization. We  therefore consider a more advanced Kuramoto model that  \cj{includes \lli{coupling conductances},}
			\yr{and is characterized by nonhomogeneous coupling weights and noncomplete coupling graphs.}		
		Lyapunov analysis once \cj{such} \li{coupling} conductances \yr{and aforementioned properties} are included \cj{becomes} nontrivial  \cj{and more conventional energy-like  Lyapunov functions are not applicable or are conservative.}
		\yr{Small-signal analysis has been performed for such models, but due to \ir{the fact that we have convergence to a manifold, stability analysis via a linearization}
is on its own inconclusive for the nonlinear \ir{model}.}
	\khlb{In this paper, we provide a formal derivation using centre manifold theory that if a particular condition on the equilibrium point associated with the  coupling conductances and susceptances  holds, then
		\cj{the synchronization manifold for the \ir{nonlinear} system considered is asymptotically stable.}}
		Our analysis is demonstrated with simulations.}
		
	\end{abstract}
	
	
	\section{Introduction}%

\jo{\li{Stability analysis of power networks is a problem that has received a considerable amount of attention.
 Power networks}
are generally a complex interconnection of
	\cj{subsystems that include also nonlinear dynamics.}
	 These include controllable inverter-based microgrids which have been identified to offer good prospects for the integration of renewable energy sources in future grids.
Considering the nonlinear and complex dynamics associated with power systems, simple models which are valid on slow timescales have been used to gain insight into the stability of their synchronized motion. \li{Such a model}
\lli{is the Kuramoto} model of \lli{coupled oscillators~
	\cite{kuramoto2003chemical}.}
This was initially used to   study chemical oscillators \cite{kuramoto2003chemical},  and has since been employed in investigating the stability of power networks \cite{dorfler2012synchronization,simpson2013,  schiffer2013synchronization} as well as consensus  protocols of multi-agent systems \cite{olfati2007consensus, lin2007state}.
Generally, Kuramoto models can be classified into the first-order type \cite{jadbabaie2004stability, chopra2009exponential, dorfler2011critical, dorfler2012exploring,simpson2013} and second-order type \cite{dorfler2012synchronization, schiffer2013synchronization}.}


The stability problem of
\jo{coupled oscillators in  Kuramoto models}  is a particular form of  \ic{a} synchronization problem.
This is characterized by the synchronization of the frequencies of \jo{coupled oscillators} to a common constant value. For power networks, this implies that the frequency of the  generators (inverters) converge to a synchronous value.
 The synchronization of \jo{coupled Kuramoto oscillators has been widely reported in the literature, e.g.
\cite{jadbabaie2004stability, chopra2009exponential, dorfler2011critical, dorfler2012exploring} and with particular  application to   inverter-based microgrids  \cite{simpson2013,schiffer2013synchronization}.}

\jo{
\li{Kuramoto-type models} have been used in the literature to provide intuition on the \li{behaviour} 
of power networks \li{at} 
  slow \li{timescales, by providing analytical results that hold in general network topologies.}
  \li{Such models have commonly been used to describe} 
  oscillators interconnected via lossless couplings \cite{jadbabaie2004stability, chopra2009exponential, dorfler2011critical, dorfler2012exploring,simpson2013}, and thus the analysis of these Kuramoto-type \li{models} has been predominantly performed via Lyapunov functions.
	 However,   the line conductances (or resistances) in e.g. inverter-based grids are important as they can inhibit   synchronization.
	 \yr{Also, these power networks are usually characterized by \ir{nonhomogeneous}
coupling weights and \ir{noncomplete}
\icr{coupling graphs.}}
	 \lk{Hence, here  we consider a more advanced Kuramoto model that   includes coupling conductances}
	 \yr{together with heterogeneous coupling weights and noncomplete coupling graphs, in contrast to those in e.g. \cite{chopra2009exponential,jadbabaie2004stability,ha2014large}.}
	 Lyapunov analysis once \cj{such} conductances \yr{and aforementioned coupling properties} are included \cj{becomes nontrivial as more conventional energy-like Lyapunov functions are not applicable or are conservative, \yr{as reported in}  \cite{dorfler2012synchronization}.}
	 \yr{Small-signal analysis for this case exists in the literature, e.g \cite{schiffer2013synchronization}; however, since the system converges to \ir{a manifold
a small-signal} analysis is on its own  inconclusive for  the nonlinear system \cite{khalil2014}.	
 }
}


	\khlb{\yr{In this paper, we study the stability of the system} 
		using centre manifold theory \cite{khalil2014, wiggins2003introduction, jouini2021frequency}. \yr{Centre manifold theory  allows to study the stability of nonlinear systems  when linearization fails \cite{khalil2014}.} 
		Using this theory, we provide a formal derivation  that if a particular \ir{condition
involving the} coupling conductances and susceptances of the lossy Kuramoto-type model holds \ir{at an equilibrium point}, the synchronization manifold for the system considered is asymptotically stable.}	
%
%
%
%
%
%
%
\yr{
\ir{A numerical example is also provided} to demonstrate the \ir{results} of our analysis.}

\khlb{The remainder of the paper is organized as follows.
\jo{Preliminaries and \li{the} problem setup are  given in  section \ref{nota}.
Section \ref{main} presents our main results.
\li{A simulation} example is given in section~\ref{simureslt} and \li{conclusions are} drawn in section~\ref{conclu}.}}

	\section{\jo{Preliminaries and Problem Setup}}
	\label{nota}
	Let $\mathbb{R}_{\geq0}=\{x\in\mathbb{R}|x\geq0\}$,  $\mathbb{R}_{>0}=\{x\in\mathbb{R}|x>0\}$.
	\yo{We denote by}
	 $\mathbf{1}_n$ $(\mathbf{0}_n)$  the $n$-dimensional column	vector of ones (zeros), and  $\mathbf{I}_{n}$ is the	identity matrix of size $n$, and $\mathbf{j}$ a complex number.
	Given an $n$-tuple $(x_1,\ldots,x_n)$,
	$x=\text{col}(x_i) \in \mathbb{R}^{n}$ \li{is a column} vector, $\text{diag}(x_i)\in\mathbb{R}^{n\times n}$ \li{is an $n\times n$ diagonal matrix},
	\yo{and $\|x\|$ denotes \li{the 2-norm of $x$ and $\|x\|_{\infty}$ its} infinity norm.}
	\yo{Given $A\in\mathbb{R}^{n\times m}$, let $A^{\top}$ denote its transpose.} 
	\li{Let 
$\mathbb{T}^{1}:=[-\pi, \pi]$,}
	 a phase angle $\delta_i$ is a point $\delta_i\in\mathbb{T}^{1}$, \yo{and}
	an arc with length $\gamma$ is \kl{a} connected subset of $\mathbb{T}^{1}$.
	
	\subsection{Graph theory}
	\label{nota-graph}
	We denote a directed  graph by $\mathcal{G}=(\mathcal{V},\mathcal{E})$, where $\mathcal{V}=\{1, 2, \ldots, N\},\,N=|\mathcal{V}|$ is the set \kkl{of vertices  (nodes)}, and   $\mathcal{E}\subseteq \mathcal{V}\times \mathcal{V}$ is the set of directed edges with $E=|\mathcal{E}|$.
	Node $j\in\mathcal{V}$ is a neighbour of a node $i\in\mathcal{V}$ if  $(i,j)\in \mathcal{E}$, and the set of neighbours of a node $i\in\mathcal{V}$ is denoted by $\mathcal{N}_i$.
	For each edge $(i,j)\in \mathcal{E}$,
	a number $z\in\{1, \ldots,E\}$ and an arbitrary direction are assigned and the entries of the incidence  matrix $\mathcal{B} \in\mathbb{R}^{N\times E}$ are defined as  $\mathcal{B}_{iz}=1$ if node $i$ is the source of edge $z$ and $\mathcal{B}_{iz}=-1$ if node $i$ is the sink of edge $z$, with all other elements being zero.
	\yo{Let the graph $\mathcal{G}$ be connected}, i.e.  for all $i,j\in\mathcal{V}$, there exists a path from $i$ to $j$.
	Hence, \yl{$\text{span}(\mathbf{1}_N)\in\ker(\mathcal{B}^{\top})$.}
	Let ${\mathbf{\hat{B}}} \in\mathbb{R}^{N\times E}$ be the element-wise absolute \ill{value of $\mathcal{B}$.}

	\subsection{Problem setup}\label{model}
	\jo{We consider in this paper a Kuramoto-type model that takes coupling conductances into account.
		This is briefly described as
		\lli{follows (\cite{dorfler2012synchronization} can be consulted for an analogous derivation from first principles)}.
		 	Let the interconnection of the oscillators be described by the connected graph $\mathcal{G}$.	Let each oscillator and load be connected at each node $i\in\mathcal{V}$, and the coupling between every node pair $(i, j) \in \mathcal{E}$ be characterized by an admittance $Y_{ij}=G_{ij}+\mathbf{j}B_{ij}$, 	where
	$G_{ij}, B_{ij} \in\mathbb{R}_{>{0}}$ are  the coupling conductance (losses) and susceptance respectively.
	Note that in the case of nodes to which only loads are connected, these can be eliminated using Kron-reduction \cite{kundur94,dorfler2012kron} leading to a lower dimensional representation of the original model where each  node has an oscillator connected to it.
	\lli{We define}}
	\begin{equation}  \label{Pn}
		\hspace{-0.62cm}	\begin {matrix}		
		&  \varpi_i=P^d_i-P_{L_i} -  \sum_{ j\in\mathcal{N}_i} G_{ij}V^2_i,
		\\&a_{ij}=V_iV_jG_{ij},~~ b_{ij}=V_iV_jB_{ij}, ~~\delta_{ij}(t)=\delta_i(t)-\delta_j(t),
		\vspace{-0.125cm}
	\end{matrix}
\end{equation}	
\jo{where $V_i, V_j \in\mathbb{R}_{>0}$ are the internal voltages of the oscillators at nodes $i, j$; $a_{ij}$, $b_{ij} \in\mathbb{R}_{>0}$ are respectively the lossy and lossless coupling  weights; $P^d_{i}, P_{L_i}\in\mathbb{R}_{>0}$ are respectively the desired power injection and the constant  power load at a node $i$; and $\varpi_i$ denotes the \li{effective power injection at node $i$ which is often referred to in Kuramoto models as the natural frequency \cite{dorfler2012synchronization} at that node. Variables} 
$\delta_i(t), \delta_j(t)\in \mathbb{T}^1$ \li{denote} the phase angle of the oscillators at node $i$ and $j$, \li{respectively}; \li{$\delta_{ij}(t)$ is} the phase difference between the  node pair $(i, j)\in\mathcal{E}$.	
	 Therefore, the dynamics of the Kuramoto model considered in this paper describing the interconnection of oscillators via lossy couplings \li{are} given by}
\begin{equation}
	\label{slow2}
	\hspace{-0.2cm}\dot\delta_i(t)=k_{pi}\varpi_i-k_{pi} \hspace{-0.2cm}  \displaystyle\sum_{ j\in\mathcal{N}_i}(b_{ij}\sin(\delta_{ij}(t))-a_{ij}\cos(\delta_{ij}(t)))\vspace{-0.125cm} 
\end{equation}
 \jo{for all $i\in\mathcal{V}$, with gain $k_{pi}\in\mathbb{R}_{>0}$.
 	\yr{Note that the coupling weights $a_{ij}, b_{ij}$ in  \eqref{slow2}  \ir{are allowed to be} nonhomogeneous, i.e. the weights are not \ir{necessarily} \icr{the same} 
 for \icr{every edge} $(i,j) \in\mathcal{E}$ \ir{(as assumed in e.g.  \cite{chopra2009exponential,jadbabaie2004stability, ha2014large}).} Also, \ir{each node is not required to be connected to all other nodes, i.e. each node} 
 $i\in\mathcal{V}$ connects to its neighbouring nodes $j\in\mathcal{N}_i$ }
 \ir{where $\mathcal{N}_i\subset\mathcal{V}$}.
 	\yr{Furthermore,} the term   $a_{ij}$ being associated with the coupling conductance $G_{ij}$ in \eqref{Pn} allows the Kuramoto-type model \eqref{slow2} to take into account the coupling losses.
 	 Model \eqref{slow2} \li{is also referred} to as a first-order Kuramoto-type model.

 Note that  $a_{ij}$, $b_{ij}$ are respectively associated with the coupling conductance $G_{ij}$ and susceptance $B_{ij}$.
 Thus the term $b_{ij}\sin(\delta_{ij}(t))$ usually enables  synchronization given its negative feedback role in \eqref{slow2} while  $a_{ij}\cos(\delta_{ij}(t))$ can inhibit  synchronization  due to its positive feedback role in \eqref{slow2}.}

\begin{rmk}
	\yy{System \eqref{slow2} is a simplified model relevant for studying inverter-based power networks at slower timescales \cite{simpson2013,schiffer2014}. This can be related  \lic{to such networks } as follows.
	Let $V_i, V_j$  denote the  inverter \li{voltages at the corresponding buses}
		and the couplings represented by power lines with conductances  $G_{ij}$ and susceptances $B_{ij}$.
\li{The phase angle $\delta_i(t)$ is associated  with the 
\li{angle at each bus},
and its derivative is defined}
as the deviation between \li{the local} frequency $\omega_i(t)\in\mathbb{R}_{>0}$ and a common frequency $\omega_{0}\in\mathbb{R}_{>0}$ \cite{schiffer2014},
 \lic{i.e.,} $\dot{\delta}_i(t)=\omega_i(t)-\omega_{0}:=u_i^{\omega}(t),\,\forall i\in\mathcal{V}$. Suppose the control input $u_i^{\omega}(t)$  is generated via a droop law $u_i^{\omega}(t)=-k_{pi}(P_i(t)-P^d_i)$ \cite{schiffer2014}
where the gain $k_{p_i}\in\mathbb{R}_{>0}$ is  now the droop gain,	and
	$P_i(t)\in\mathbb{R}_{>0}$ is the overall active power injection
	at a node 	$i\in\mathcal{V}$ given by \cite{kundur94,schiffer2014, SCHIFFER2016135}
	$P_i(t)=  P_{L_i}+\sum_{ j\in\mathcal{N}_i}
	[G_{ij}V^2_i-G_{ij}V_i  V_j\cos(\delta_i(t)-\delta_j(t))
	+B_{ij}V_iV_j\sin(\delta_i(t)-\delta_j(t))]$. Then \cj{setting} $\dot{\delta}_i(t):=u_i^{\omega}(t)$ \cj{leads to} the Kuramoto-type model in \eqref{slow2}.}
\end{rmk}


\jo{In the subsequent analysis,  we  use \eqref{slow2} to investigate  the synchronization of the coupled oscillators.}


We note that in the \il{remainder} of this paper,    the  \il{arguments $t$
	of all} signals are omitted to simplify \il{the} notation.

\section{Main Results}\label{main}


\khlb{We present in this section our  main results. In particular, using centre manifold theory we show that if a particular
	condition associated with the  line conductances and \icr{susceptances is satisfied} by an equilibrium point, then local asymptotic stability of the corresponding synchronization manifold holds.}
	

	\yr{To this end, we recall the definitions for frequency synchronization (Definition \ref{defnsyn}) \li{and the synchronization (Definition \ref{defn_feasibility}) set that will be considered} in our analysis.}

	\begin{defn}[Frequency synchronization]\label{defnsyn}
	\ym{Let $\delta=\text{col}(\delta_i)\in\mathbb{T}^n$}.
	{The oscillators \yo{in~\eqref{slow2}} \yl{have achieved} synchronization if there exists    a \yo{constant} frequency deviation $\dot\delta_{\text{syn}}$ 
such that
\yl{$\dot\delta(t)=\mathbf{1}_N\kl{\dot \delta_{\text{syn}}}$}.}
\end{defn}

\begin{defn}[Synchronization set \cite{dorfler2012synchronization}]\label{defn_feasibility}
{For some {$\gamma\in[0,\pi/2-\psi_{\max})$}, $\psi_{\max}=\max_{(i,j)\in\mathcal{E}} \arctan(G_{ij}/B_{ij})$, we define the set
\mbox{$\mathcal{C}(\gamma){:=}\{\delta:\|\mathcal{B}^{\top}\delta\|_{\infty}\leq\gamma\}\subset\mathbb{T}^n$},
{i.e. this is the}  closed set of angles \mbox{$\delta=\text{col}(\delta_1,\ldots,\delta_N)$}  with any neighbouring angle pair \mbox{$\delta_{ij}, \,\forall (i,j)\in\mathcal{E}$}, no further than $\gamma$ apart.}
\end{defn}

\khlb{Before presenting our main result on the synchronization of system \eqref{slow2}, we illustrate the limitations of 
existing approaches when considering lossy interconnections.} For lossless \cj{interconnections}, \icr{i.e. $a_{ij}=0$, the} only trigonometric term that appears in \eqref{slow2} is $b_{ij}\sin(\delta_{ij})$.
	This term  has the property $b_{ij}\sin(\delta_{ij})=-b_{ij}\sin(\delta_{ji})$   which allows to find Lyapunov functions and  simplifies significantly the analysis.
	However,  this property is not  satisfied by the other term  $a_{ij}\cos(\delta_{ij})$ which is   associated with $G_{ij}$.
	\yr{
\ir{The nonhomogeneous parameters and noncomplete interconnection graph 
in \eqref{slow2} further complicates the analysis, and Lyapunov analysis once 
coupling} conductances $G_{ij}$ are included \ir{can be}  conservative \cite{schiffer2013synchronization}.}

\khlb{	\yr{\ir{Theorem \ref{thrmfreq} is therefore based on a different approach, whereby 
the centre manifold theorem is used to analyze the stability properties of the nonlinear system. In particular, we show} that the  synchronization  manifold for system \eqref{slow2} is locally asymptotically stable if a particular \ir{condition associated} with the  line conductances and susceptances \ir{is satisfied at an equilibrium \icr{point.}}}} 

	\begin{thrm}[Frequency synchronization]\label{thrmfreq}
		Consider  system \yr{\eqref{slow2}, together with}
		\li{\eqref{Pn}}.	
		Assume there exists an equilibrium point  \jo{$\delta^*$  that satisfies  $\delta^*\in\mathcal{C}(\gamma)$} for some $\gamma\in[0,\pi/2-\psi_{\max})$ \jo{with $\psi_{\max}=\max_{(i,j)\in\mathcal{E}} \arctan(G_{ij}/B_{ij})$,} \ic{and consider the manifold}\vspace{-0.125cm} 	
		\begin{equation}\label{synch-manifold}
			\yl{	\mathcal{Y}_{\delta^*}=\left\lbrace  \delta^*+\delta_0\mathbf{1}_N  \Nkl{\ | \ \delta_0\in \mathbb{T}^1 
				}\right\rbrace .}\vspace{-0.125cm}
		\end{equation}
		\jo{Then the} synchronization manifold  \yl{$\mathcal{Y}_{\delta^*}$} is locally asymptotically stable. \licc{Furthermore,} 
		there exists a neighbourhood $\Xi$ of \yl{$\mathcal{Y}_{\delta^*}$} such that for every $\delta(0)\in \Xi$,
		\yl{$\lim_{t\to\infty}\|\delta(t)-\delta^{\dagger}\|\to0$
			\ill{\illl{for some} $\delta^{\dagger}\in\mathcal{Y}_{\delta^*}$}}.
	\end{thrm}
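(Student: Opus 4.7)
The plan is to exploit the translational symmetry of \eqref{slow2} and then invoke the centre manifold theorem. The right-hand side of \eqref{slow2} depends only on the phase differences $\delta_{ij}$, so any translate $\delta^*+\delta_0\mathbf{1}_N$ is again an equilibrium; thus $\mathcal{Y}_{\delta^*}$ is an invariant, one-dimensional equilibrium manifold through $\delta^*$. A direct consequence is that the Jacobian $J$ of \eqref{slow2} at $\delta^*$ annihilates $\mathbf{1}_N$, so linearization alone cannot give asymptotic stability of $\delta^*$, and centre manifold theory is the natural tool.

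The main computational step is to characterize the spectrum of $J$. Setting
\[
w_{ij}:=b_{ij}\cos(\delta^*_{ij})+a_{ij}\sin(\delta^*_{ij})=\sqrt{a_{ij}^2+b_{ij}^2}\,\cos(\delta^*_{ij}-\psi_{ij}),
\]
with $\psi_{ij}=\arctan(G_{ij}/B_{ij})\leq \psi_{\max}$, the hypothesis $\delta^*\in\mathcal{C}(\gamma)$ with $\gamma<\pi/2-\psi_{\max}$ yields $|\delta^*_{ij}-\psi_{ij}|<\pi/2$ and $|-\delta^*_{ij}-\psi_{ij}|<\pi/2$ on every edge, so $w_{ij},w_{ji}>0$. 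Computing the partial derivatives of \eqref{slow2} then gives $J=-\text{diag}(k_{pi})\,L$, where the off-diagonal entries of $L$ are $-w_{ij}$ on the edges of $\mathcal{G}$ and zero elsewhere, with row sums equal to zero. Because both directed weights are strictly positive on every edge and $\mathcal{G}$ is connected, the weighted digraph is strongly connected; Gershgorin discs (each centred at $L_{ii}>0$ with radius $L_{ii}$) lie in the closed right half-plane and touch the imaginary axis only at the origin, and irreducibility of $L$ makes the zero eigenvalue simple with eigenvector $\mathbf{1}_N$. Left-multiplying by the positive diagonal $\text{diag}(k_{pi})$ preserves these spectral features, so $J$ has a simple zero eigenvalue and $N-1$ eigenvalues with strictly negative real part.

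With this decomposition in hand, the centre manifold theorem \cite{khalil2014,wiggins2003introduction} produces a locally invariant, one-dimensional centre manifold through $\delta^*$ tangent to $\mathrm{span}(\mathbf{1}_N)$. Since $\mathcal{Y}_{\delta^*}$ is itself invariant, one-dimensional, passes through $\delta^*$, and is tangent to $\mathrm{span}(\mathbf{1}_N)$, it is (locally) a valid realization of the centre manifold. The flow restricted to $\mathcal{Y}_{\delta^*}$ is trivial because every point on it is an equilibrium, so the reduced system is trivially stable; the reduction principle then yields that for every initial condition in a sufficiently small neighbourhood $\Xi$ of $\mathcal{Y}_{\delta^*}$ the trajectory converges to some $\delta^{\dagger}\in\mathcal{Y}_{\delta^*}$, with rate controlled by the stable spectrum of $J$. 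This is exactly the claimed local asymptotic stability of the manifold together with the pointwise convergence statement.

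The step I expect to be the main obstacle is the spectral characterization of $J$: the asymmetric, heterogeneous directed Laplacian $L$ is not covered by the standard symmetric positive semi-definite arguments familiar from the lossless Kuramoto literature. Non-negativity of the real parts of its eigenvalues comes easily from Gershgorin, but simplicity of the zero eigenvalue requires one to translate connectivity of $\mathcal{G}$ together with the positivity of both $w_{ij}$ and $w_{ji}$, guaranteed by $\delta^*\in\mathcal{C}(\gamma)$, into strong connectivity and hence irreducibility of the weighted digraph. Once that is in place, the centre manifold step is essentially formal.
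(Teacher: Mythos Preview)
Your proposal is correct and follows essentially the same route as the paper: compute the Jacobian at $\delta^*$, recognise it as $-\text{diag}(k_{pi})$ times a non-symmetric Laplacian with strictly positive edge weights under the hypothesis $\delta^*\in\mathcal{C}(\gamma)$, deduce a simple zero eigenvalue along $\mathbf{1}_N$ with the remaining spectrum in the open left half-plane, and then invoke the centre manifold theorem with $\mathcal{Y}_{\delta^*}$ itself serving as the centre manifold so that the reduced dynamics are identically zero. The only minor differences are presentational: you justify the spectral claim via Gershgorin plus irreducibility, whereas the paper packages the Laplacian structure into a separate proposition and is more explicit about the coordinate change to normal form and about propagating the local result from a neighbourhood of $\delta^*$ to a tubular neighbourhood of the whole manifold $\mathcal{Y}_{\delta^*}$.
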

	
	The proof can be found in \ym{Appendix \ref{proofsyn-a}.}

	\begin{rmk}
		\jo{Using \cj{the} centre manifold theorem, Theorem \ref{thrmfreq} \lli{shows that} the synchronization manifold for the lossy case \eqref{slow2} is asymptotically stable for  an equilibrium point $\delta^*$   that satisfies a condition associated with the  line conductances and susceptances, i.e. for
			$\delta^*\in\mathcal{C}(\gamma)$ for some $ \gamma\in[0,\pi/2-\psi_{\max})$ with $\psi_{\max}=\max_{(i,j)\in\mathcal{E}} \arctan(G_{ij}/B_{ij})$.
			This  is \lli{a condition} analogous to \lli{that} for the lossless case e.g. \cite{jadbabaie2004stability,chopra2009exponential,dorfler2011critical,dorfler2013synchronization,dorfler2012exploring} where Lyapunov analysis is used to show asymptotic stability	for  an equilibrium point 		$\delta^*\in\mathcal{C}(\gamma)$ with $ \gamma\in[0,\pi/2)$.}
	\end{rmk}

	\begin{rmk}
\ir{We would like to note that system \eqref{slow2} has the following equivalent representation (stated as Lemma \ref{altmodel} in Appendix \ref{proofaltmodel}).
	\vspace{-0.125cm}
	\begin{equation} \hspace{-0.4cm} \label{eq:eqRep}
		\dot{\delta}_i=k_{pi}\varpi_i- k_{pi} \hspace{-0.1cm}\sum_{ j\in\mathcal{N}_i}
		\sqrt{a^2_{ij}+b^2_{ij}}\,\sin(\delta_{ij}-\psi_{ij}), \hspace{2mm} \forall i\in\mathcal{V} \vspace{-0.125cm}	\end{equation}
	where \mbox{$\psi_{ij}=\arctan(a_{ij}/b_{ij})=\arctan(G_{ij}/B_{ij})$}. Therefore $\psi_{ij}$ correspond to phase shifts and $\psi_{\max}$ used in Theorem \ref{thrmfreq} is the maximum  of these  phase shifts.}
\yr{Since $G_{ij}, B_{ij}>0$ (or $a_{ij}, b_{ij}>0$),
	 practical values\yr{\footnote{In the case of lossless couplings $\psi_{ij}=0$ since  $G_{ij}=0$ (i.e. $a_{ij}=0$).}} of $\psi_{ij}$ are in the range $0<\psi_{ij}<\pi/2$.
	The implication of the phase shifts $\psi_{ij}$ is discussed in Remark \ref{remark-psi} below.}
	\end{rmk}
	
	\begin{rmk}\label{remark-psi}
		It is a well-known fact that \ir{asymptotic stability of the synchronization manifold is guaranteed} in \yr{lossless networks} when the magnitude of the	phase differences $|\delta_{ij}|$ \ir{at equilibrium} \yr{do not}  exceed  $\pi/2$  \cite{kundur94,simpson2013,dorfler2013synchronization}.
		However, \yr{the  presence of the
			phase shifts $\psi_{ij}$  in  \eqref{slow2}, \ir{as revealed by its equivalent representation \eqref{eq:eqRep},  
}} imposes additional {constraints} on the  values that $|\delta_{ij}|$  can take to guarantee frequency synchronization in \li{system~\eqref{slow2}.} 		
		\yr{This provides insight into why power networks such as small-scale inverter-based  microgrids can often encounter stability issues.
			For these networks, the resistances of their couplings (interconnecting power lines) can be significant
			and thus  $\psi_{ij}$ can take large values close to $\pi/2$.  This  \icr{results in} 
high  resistance to the flow  necessary for synchronization since the conductance $G_{ij}$  responsible for inhibiting synchronization is large, thus explaining why 
			these networks can often encounter stability issues.
			
\ir{This} is the problem 
			\ir{design approaches} 
such as the
			use of  inductors to couple~inverters (oscillators) to the   nodes (buses) \cite{pogaku2007,yemijeremy2019,yemiisgt2021} and the implementation~of virtual impedances in inverters \cite{guerreroLuis2005,he2013islanding,ysunGuerrero2017}, aim to solve. Being chosen to be predominantly inductive, these coupling inductors and virtual impedances allow to \icr{increase} 
the line susceptances $B_{ij}$ (which supports synchronization), thereby rendering  $\psi_{ij}$ (as well as $\psi_{\max}$)  sufficiently small. This \icr{allows large} values of \icr{for the} \ir{parameter $\gamma$ in Theorem \ref{thrmfreq}}. This means that the equilibrium phases $\delta^*$ can be farther apart and still belong to the  set  $\mathcal{C}(\gamma)$, \ir{as required}  in Theorem \ref{thrmfreq}. Thus this implies that the condition in Theorem \ref{thrmfreq}  would become \ir{even easier} to satisfy.}

	\end{rmk}
	

~\\
	\jo{In what follows, we
		\yy{show the} uniqueness of the phase differences for any equilibrium point  $\delta^*\in\mathcal{C}(\gamma)$.}

	\begin{prop}\label{propfreq}
			\jo{\yy{Consider} \lic{system~\eqref{slow2} and} the phase differences
				$\delta^*_{ij}\,\forall (i,j)\in\mathcal{E}$  at any equilibrium point  $\delta^*\in\mathcal{C}(\gamma)$  for some $\gamma\in[0,\pi/2-\psi_{\max})$, with $\psi_{\max}=\max_{(i,j)\in\mathcal{E}} \arctan(G_{ij}/B_{ij})$.	
				\yy{Then}}
			\lic{for each  $(i,j)\in\mathcal{E}$ the phase difference $\delta^\ast_{ij}$ has a unique value for all such equilibrium points.}	
		\end{prop}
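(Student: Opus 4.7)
The plan is to combine the equivalent representation \eqref{eq:eqRep} with a mean-value-theorem integral to reduce the uniqueness of the edge differences at equilibrium to showing that a certain averaged Jacobian has kernel exactly $\mathrm{span}(\mathbf{1}_N)$. Using \eqref{eq:eqRep}, the equilibrium condition $\dot{\delta}=\mathbf{0}_N$ is equivalent to $F(\delta^\ast)=\varpi$, where $F_i(\delta):=\sum_{j\in\mathcal{N}_i}\sqrt{a_{ij}^2+b_{ij}^2}\sin(\delta_{ij}-\psi_{ij})$ and $\varpi=\mathrm{col}(\varpi_i)$. Let $\delta^a,\delta^b\in\mathcal{C}(\gamma)$ be any two equilibria, taken with representatives in $\mathbb{R}^N$ whose edge differences lie in $[-\gamma,\gamma]$. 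Since $\mathcal{C}(\gamma)$ is convex (as the preimage of a sup-norm ball under the linear map $\mathcal{B}^\top$), the segment $\delta(\lambda):=\lambda\delta^a+(1-\lambda)\delta^b$, $\lambda\in[0,1]$, remains in $\mathcal{C}(\gamma)$, and the fundamental theorem of calculus yields
\begin{equation*}
\mathbf{0}_N=F(\delta^a)-F(\delta^b)=J\,(\delta^a-\delta^b),\quad J:=\int_0^1 F'(\delta(\lambda))\,d\lambda.
\end{equation*}

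The next step is to compute $F'(\delta)$ entry-wise. A direct differentiation gives $[F'(\delta)]_{ii}=\sum_{j\in\mathcal{N}_i}\sqrt{a_{ij}^2+b_{ij}^2}\cos(\delta_{ij}-\psi_{ij})$ and $[F'(\delta)]_{ij}=-\sqrt{a_{ij}^2+b_{ij}^2}\cos(\delta_{ij}-\psi_{ij})$ for $j\in\mathcal{N}_i$ (and zero otherwise), so each row of $F'(\delta)$ sums to zero, i.e.\ $F'(\delta)\mathbf{1}_N=\mathbf{0}_N$. The crucial observation is that for every $\delta\in\mathcal{C}(\gamma)$ and every edge $(i,j)\in\mathcal{E}$, $|\delta_{ij}-\psi_{ij}|\le\gamma+\psi_{\max}<\pi/2$, so each of these cosines is strictly positive. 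Hence $F'(\delta)$ has strictly negative off-diagonal entries precisely on the edges of $\mathcal{G}$. Averaging over $\lambda$ preserves both the zero-row-sum property and the strict sign of the off-diagonals on $\mathcal{E}$, so $J$ is a (generally non-symmetric) weighted graph Laplacian whose support coincides with the connected graph $\mathcal{G}$.

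Standard results on such Laplacians (for instance via a Gershgorin/M-matrix argument applied to $D-J$, with $D=\mathrm{diag}(J)$, combined with Perron--Frobenius for the associated irreducible non-negative matrix obtained from $D-J$) then give $\ker J=\mathrm{span}(\mathbf{1}_N)$. Substituting this into the identity above yields $\delta^a-\delta^b\in\mathrm{span}(\mathbf{1}_N)$, and therefore $\mathcal{B}^\top \delta^a=\mathcal{B}^\top\delta^b$, which is exactly the claim that on every edge $(i,j)\in\mathcal{E}$ the phase difference $\delta^\ast_{ij}$ has a single value across all equilibria in $\mathcal{C}(\gamma)$.

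The hardest step is justifying $\ker J=\mathrm{span}(\mathbf{1}_N)$ rigorously in the present lossy setting, because $J$ need not be symmetric: the entries $[J]_{ij}$ and $[J]_{ji}$ involve $\cos(\delta_{ij}-\psi_{ij})$ and $\cos(\delta_{ij}+\psi_{ij})$ respectively, which differ whenever $\psi_{ij}\neq 0$. One therefore cannot invoke the usual spectral theorem for symmetric graph Laplacians, and the kernel characterisation must be obtained by exploiting simultaneously the connectedness of $\mathcal{G}$ and the strict positivity of the cosines on $\mathcal{C}(\gamma)$ guaranteed by the hypothesis $\gamma<\pi/2-\psi_{\max}$.
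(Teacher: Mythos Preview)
Your argument is correct and substantially more detailed than the paper's own proof. In Appendix~\ref{proofsyn-b} the paper notes that the right-hand side of \eqref{slow2} is real analytic and then simply asserts that the map $\delta\mapsto f(\delta)$ is injective on $\mathcal{C}(\gamma)$, from which uniqueness of the edge differences is declared to follow; no mechanism for the injectivity is spelled out there, and the relevant structural fact---that the Jacobian is a connected non-symmetric Laplacian with strictly positive edge weights on $\mathcal{C}(\gamma)$---appears only separately in Proposition~\ref{lemL} without the link being made explicit. Your route makes that link precise: using convexity of $\mathcal{C}(\gamma)$ you integrate the Jacobian along the segment joining two equilibria to obtain an averaged matrix $J$ that inherits zero row sums and strictly negative off-diagonals on $\mathcal{E}$ (in both directions, since $\cos(\delta_{ij}\pm\psi_{ij})>0$ under $\gamma+\psi_{\max}<\pi/2$), and then a Perron--Frobenius/irreducibility argument gives $\ker J=\mathrm{span}(\mathbf{1}_N)$. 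What your approach buys is an actual proof of the injectivity step that the paper merely states; what the paper's version buys is brevity, at the cost of leaving the reader to reconstruct essentially the argument you wrote.
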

		
		\ym{The proof can be found in Appendix \ref{proofsyn-b}.}

	\section {Simulation Results} \label{simureslt}
	To validate our theoretical analysis, we simulate
	\jo{an example associated with \eqref{slow2}. The network we consider}
	 has
	 $N$=$10$ \jo{oscillators} 
	 and $E$=$15$ 
	 coupling lines.
	 \yo{The 
	 	voltages \licc{$V_i, i\in\mathcal{V}$ are uniformly distributed in $[0.9,1]\,\text{pu}$,  which we denote as $V_i\in\text{uni}(0.9,1)\,\text{pu}$.}}
The \jo{coupling} parameters are
$G_{ij}\in\text{uni}(0.3,0.9)\,\text{S}, B_{ij}\in\text{uni}(1.2,2.9)\,\text{S}$,
 \mbox{$\forall (i,j)\in\mathcal{E}$}, $ \psi_{\max}=0.48\,\text{rad}$. 
\yo{The local loads are $P_{L_i}=\hat{G}_{ii}V_i^2~\,\forall i\in\mathcal{V}$ with  load conductances $\hat{G}_{ii} \in\text{uni}(0.02,0.05)\,\text{S}, \,\forall i\in\mathcal{V}$.}
The  gains \li{are} 
  $k_{pi}$=$0.1\,\text{Hz$/$pu}$ for $i=1$-$4,6,7$, $k_{pi}=\kkl{2\times 0.1}\,\text{Hz$/$pu}$  for $i=5,8$, and $k_{pi}=\kkl{3\times 0.1}\,\text{Hz$/$pu}$ for  $i=10$.
	We start the  simulation from  initial phases which  satisfy $\delta(0)\in\mathcal{C}(\gamma),\,\gamma<\pi/2-\psi_{\max}$.
\ym{Fig.~\ref{sychresult}(a) shows the phase angles and Fig.~\ref{sychresult}(b) the frequency deviations. 
	The phase angles converge to
the set $\mathcal{Y}_{\delta^*}$ in  $\mathcal{C}(\gamma)$ with $\|\mathcal{B}^{\top}\delta^*(t)\|_\infty=0.14\,\text{rad}<\gamma^\prime$,
$\gamma^\prime=\pi/2-\psi_{\max}=1.09 ~\text{rad}$.
Moreover, 	the frequency deviations $\dot\delta(t)$ converge to zero, which shows that the network  synchronizes.}
\yr{This is achieved with the condition in Theorem \ref{thrmfreq} satisfied, i.e $\delta^*\in\mathcal{C}(\gamma)$, $\gamma\in[0,\pi/2-\psi_{\max})$.
Moreover, the simulation shows that the nonlinear system is stable when the initial phases $\delta(0)$ also satisfy this condition.}

\yr{Furthermore, we compare  our result to \ir{the condition in \cite[Theorem $4.4$]{dorfler2012synchronization} that involves a bound by the} algebraic connectivity\footnote{The second smallest eigenvalue of a Laplacian.} \irr{$\lambda_2$}.
We compute    $\lambda_2(\mathcal{L}_y)=2.02$ where $\mathcal{L}_y=\mathcal{B}\text{diag}\left(\sqrt{a^2_{ij}+b^2_{ij}}\,\cos(\psi_{ij})\right)\mathcal{B}^{\top}$, and \ir{\irr{parameter}  $\lambda_{\text{critical}}$ in \cite[Theorem $4.4$]{dorfler2012synchronization} is computed} as $1.7\times10^4$. \ir{We have that} $\lambda_2(\mathcal{L}_y)$ is less than $\lambda_{\text{critical}}$, which shows that the condition in \cite[Theorem $4.4$]{dorfler2012synchronization}  fails even though the  system synchronizes. This is similarly observed for other network sizes.
Hence, the  condition in Theorem  \ref{thrmfreq}  is less conservative compared to that in \cite[Theorem $4.4$]{dorfler2012synchronization} \ir{when local asymptotic stability of the synchronization manifold is deduced}.}


	\section{Conclusion}\label{conclu}
\jo{We have considered in this paper the problem of frequency synchronization of interconnected oscillators in a Kuramoto-type model that takes coupling conductances  into account.
	We have provided a formal derivation using centre manifold theory
	\cj{of a stability result associated with such systems. In particular, if a certain condition}
	on the equilibrium point associated with the  coupling conductances and susceptances  \cj{is} satisfied, then the \cj{synchronization manifold} for the system considered is locally asymptotically stable.}
The results \licc{have been} demonstrated with simulations.

\begin{figure}[t]
	\includegraphics[width=.84\linewidth]{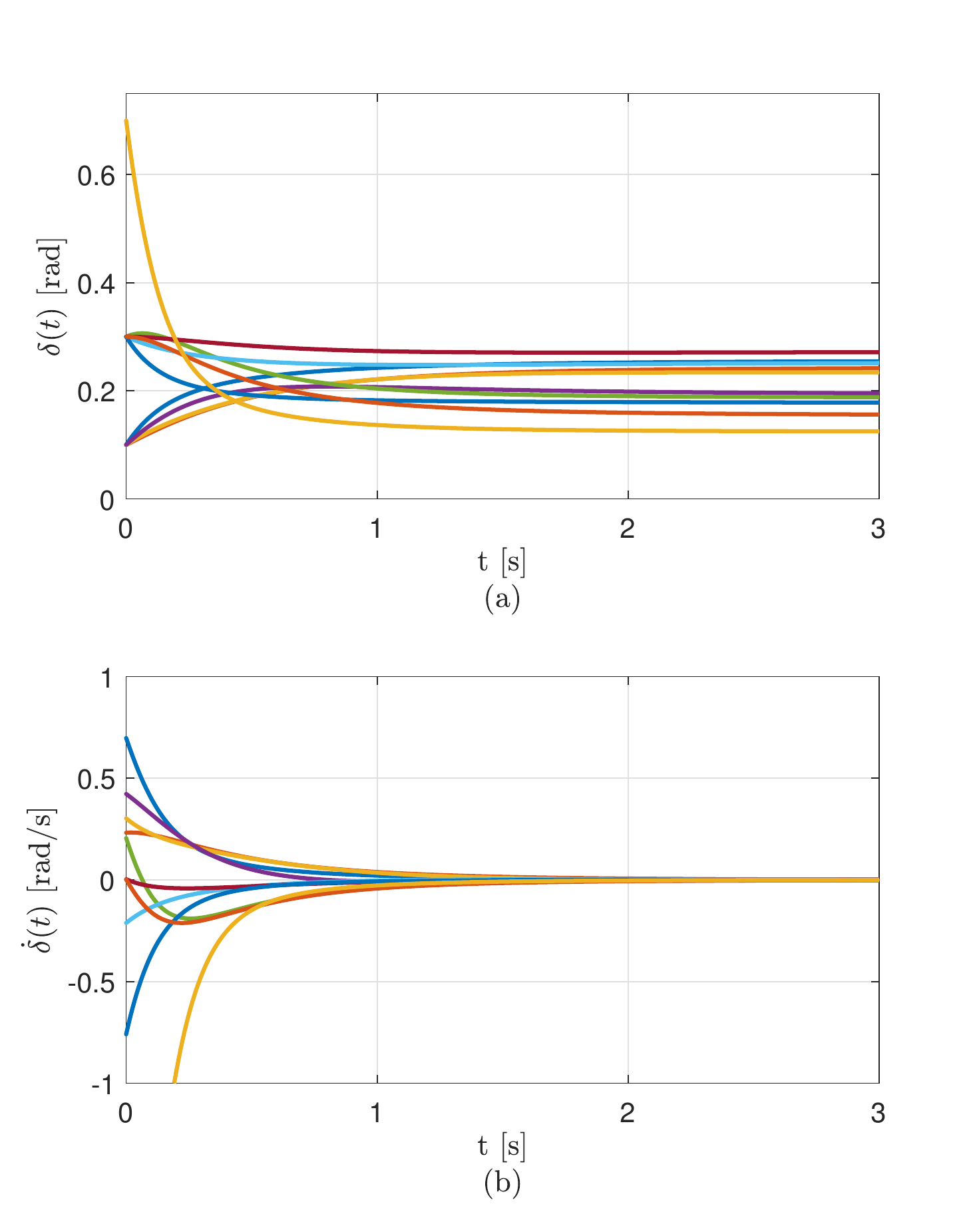}\vspace{-0.4cm}
	\caption{System responses: (a) phase angles; (b) frequency deviations}
	\label{sychresult}
\end{figure}

	\appendices

	\section{Proof of Theorem \ref{thrmfreq}} \label{proofsyn-a}	
\yr{\ir{We first note a property of
 the Jacobian associated with \eqref{slow2} which follows from Proposition \ref{lemL} in Appendix \ref{proofJacobian}. In particular,}
let system \eqref{slow2} be written
	as $\dot\delta_i=-k_{pi} H_i(\delta)$ ~$\forall i\in\,\mathcal{V}$, where
	\begin{equation}\label{H}
		H_i(\delta):= \varpi_i+ \mathsmaller{\sum}_{ j\in\mathcal{N}_i}(b_{ij}\sin(\delta_{ij})-a_{ij}\cos(\delta_{ij})).
	\end{equation}	
	Let the Jacobian  $\tilde{\mathcal{L}}(\delta^*)$ of the matrix of  $\begin{bmatrix} H_1(\delta) & \cdots & H_N(\delta) 	\end{bmatrix}^T$ be defined as follows\vspace{-0.125cm}	\begin{equation}\small \label{jacobianL}
			\tilde{\mathcal{L}} {(\delta^*)}=
			\begin{bmatrix} \small
				\tilde{\mathcal{L}}_{11} {(\delta^*)}  &\ldots& \tilde{\mathcal{L}}_{1N} {(\delta^*)} \\
				\vdots & \ddots & \vdots\\
				\tilde{\mathcal{L}}_{N1} {(\delta^*)}  &\ldots& \tilde{\mathcal{L}}_{NN} {(\delta^*)}
		\end{bmatrix}\vspace{-0.125cm}
	\end{equation}		
	where $ \tilde {\mathcal{L}}_{ij} {(\delta^*)}=	\dfrac{\partial  H_i(\delta)}{\partial\delta_j}\bigg|_{\delta=\delta^*}$,
	and $\delta^*\in\mathbb{T}^n$ is a fixed point.
	\ir{In Proposition} \ref{lemL} we show that $\tilde{\mathcal{L}}(\delta^*)$ is a non-symmetric Laplacian matrix 	if the condition on the equilibrium point in Theorem \ref{thrmfreq} is satisfied. \ir{Note that by Laplacian} matrix we mean that 	$\tilde{\mathcal{L}}_{ii}\kl{(\delta^*)}+ \mathsmaller{\sum}_{j\in\mathcal{N}_i}\tilde{\mathcal{L}}_{ij}\kl{(\delta^*)}=0, \, \forall i\in\mathcal{V}$ and    $\tilde{\mathcal{L}}_{ii}\kl{(\delta^*)}>0, \tilde{\mathcal{L}}_{ij}\kl{(\delta^*)}<0,\,\forall j\in\mathcal{N}_i$.}

\yr{We now proceed to prove Theorem~\ref{thrmfreq}. We} apply \ill{centre manifold theory as} described in \cite[Chapter $8$]{khalil2014}, 
\yl{and we invoke \lli{arguments similar} to those in \cite[Theorem $18.1.3$]{wiggins2003introduction} and  \cite[Theorem IV.$3$]{jouini2021frequency}}.
\yl{Let $\delta^*$ be a point generating the  manifold $\mathcal{Y}_{\delta^*}$ in~\eqref{synch-manifold}, and we denote by $\tilde \delta= \delta - \delta^*$  any deviation from $\delta^*$.}
 \lic{We rewrite} \eqref{slow2} \yl{as a shifted function as follows,}
\begin{equation}\label{delta-nonlinear1}
	\khl{\dot {\tilde \delta}}=f(\tilde \delta+ \delta^*)-f(\delta^*)\vspace{-0.125cm}
\end{equation}
where $f(\tilde \delta+ \delta^*):=\text{col}(\khl{-k_{pi} \tilde H_i(\tilde \delta+ \delta^*)})\in\mathbb{R}^{N}$,
$f(\delta^*):=\text{col}(\khl{-k_{pi} \tilde H_i( \delta^*)})\in\mathbb{R}^{N}$, and
\begin{equation}
\begin{split}
	\tilde H_i(\tilde \delta + \delta^*)&=  \mathsmaller{\sum}_{ j\in\mathcal{N}_i}(b_{ij}\sin(\tilde{\delta}_{ij}+\delta^*_{ij})-a_{ij}\cos(\tilde{\delta}_{ij}+\delta^*_{ij})),				\\
	\tilde H_i(\delta^*)&= \mathsmaller{\sum}_{ j\in\mathcal{N}_i}(b_{ij}\sin(\delta^*_{ij})-a_{ij}\cos(\delta^*_{ij})).	 \vspace{-0.125cm}
\end{split}	
\end{equation}
Model \eqref{delta-nonlinear1}  is then rewritten as
\begin{equation}\label{delta-nonlinear2}
	\dot{\tilde \delta}=-k_p\tilde{\mathcal{L}}{(\delta^*)} \tilde \delta + \underbrace{f(\tilde \delta+ \delta^*)-f(\delta^*)+k_p\tilde{\mathcal{L}}{(\delta^*)}\tilde\delta}_{\tilde{f}(\tilde \delta)}\vspace{-0.125cm}
\end{equation}
where  $\tilde{f}(0)=0, \frac{\partial \tilde{f}}{\partial \tilde \delta}(0)=0$. \ir{As noted above,}  by \yr{Proposition}~\ref{lemL} $\tilde{\mathcal{L}}{(\delta^*)}$ is a Laplacian matrix, {and let $k_p=\text{diag}(k_{pi})\in\mathbb{R}_{>0}^{N\times N}$.}
Noting that $k_p$ is  diagonal  with positive entries, the product \yl{$k_p\tilde{\mathcal{L}}{(\delta^*)}$} has only one zero eigenvalue and all others are positive. Hence there exists a transformation matrix $\yo{\mathcal{T}}\in\mathbb{R}^{N\times N}$ which is the eigenbasis of \yl{$k_p\tilde{\mathcal{L}}{(\delta^*)}$} such that
 $-\yo{\mathcal{T}}k_p\tilde{\mathcal{L}}{(\delta^*)}\mathcal{T}^{-1}=\begin{bmatrix}
	A_1 &0 \\0 &A_2
\end{bmatrix}$,
with $A_1=0$ being the single zero eigenvalue and the second block diagonal matrix $A_2\in\mathbb{R}^{(N-1)\times (N-1)}$ contains all  the remaining negative eigenvalues (i.e. $A_2$ is Hurwitz). Using the coordinate transformation \yl{$[\tilde y~\tilde z]^{\top}=\yo{\mathcal{T}}\tilde \delta$},
\yl{and noting that $[y ~z]^{\top}=\yo{\mathcal{T}} \delta$, $[y^*~ z^*]^{\top}=\yo{\mathcal{T}} \delta^*$, $\tilde y=y-y^*, \tilde z=z-z^*$,}
the \emph{normal form} of \eqref{delta-nonlinear2} is \khl{obtained~as} 
\begin{equation}\small\label{delta-nonlinear3}
	\begin{split}
	&\begin{bmatrix}
		\dot {\tilde y} \\ \dot {\tilde z}
	\end{bmatrix}=-\mathcal{T}k_p\tilde{\mathcal{L}}{\left(\mathcal{T}^{-1}\begin{bmatrix}	y^* \\ z^* \end{bmatrix}\right)} \mathcal{T}^{-1}\begin{bmatrix}
		\tilde y \\ \tilde z
	\end{bmatrix} + \mathcal{T}S(\tilde y  , \tilde z )\\
\end{split}
\end{equation}
where
\begin{equation}\small \label{S}
	\begin{split}
	S(\tilde y  , \tilde z ):=&
f\left(\mathcal{T}^{-1}\begin{bmatrix}	\tilde y+y^*  \\ \tilde z +z^*  \end{bmatrix}\right)
	-f\left( \mathcal{T}^{-1}\begin{bmatrix}	y^* \\ z^* \end{bmatrix}\right) \\
	& +k_p\tilde{\mathcal{L}}{\left(\mathcal{T}^{-1}\begin{bmatrix}	y^* \\ z^* \end{bmatrix}\right)}\mathcal{T}^{-1}
\begin{bmatrix}
	\tilde y \\ \tilde z
\end{bmatrix},\vspace{-0.125cm}
\end{split}
\end{equation}
which is compactly expressed as \vspace{-0.125cm}
\yl{\begin{subequations}\label{delta-nonlinear4}
	\begin{align}
		\label{delta-nonlinear4a}
	\dot {\tilde y}&=S_1(\tilde y , \tilde z)\vspace{-0.125cm} \\
	\label{delta-nonlinear4b}
	\dot {\tilde z}&=A_2\tilde z+S_2(\tilde y, \tilde z)
	\end{align}
\end{subequations}
where similarly to $\tilde{f}$ we must have
$S_i(0,0)=0,   \frac{\partial S_i}{\partial \tilde y}(0,0)=0, \frac{\partial S_i}{\partial \tilde z}(0,0)=0,\,\text{for} \,\, i=1,2$.
}
Since $A_1=0$ and $A_2$ is Hurwitz, then by \cite[Theorem $8.1$]{khalil2014}
there exists a  sufficiently small constant $\eta>0$ and a continuously differentiable~function \yl{$\chi(\tilde y)$}, defined for all \yl{$\|\tilde y\|<\eta$}, such that the invariant~manifold  \yl{$\tilde z=\chi(\tilde y)$} for \eqref{delta-nonlinear4} is \emph{\khl{a} \Nkl{centre manifold}} $W^c$ for \eqref{delta-nonlinear4}, that is, \vspace{-0.125cm}
\begin{equation}\label{center-manifold}
		\begin{split}
	W^c=&\left\lbrace (\tilde y,\tilde z)~\left| ~
	\begin{matrix}
	\tilde z=\chi(\tilde y),
	 \|\tilde y\|<\eta~,\\  \chi(0)=0, \frac{\partial \chi}{\partial \tilde y}(0)=0
	\end{matrix}\right.
\right\rbrace .\vspace{-0.125cm}
	\end{split}
\end{equation}
The   \emph{reduced system}  restricted to the \Nkl{centre manifold} $W^c$ is then given by \vspace{-0.125cm} \cite[Theorem $18.1.2$]{wiggins2003introduction}
\yl{\begin{equation}\label{reduced-manifold}
	\dot{\tilde{y}}=S_1(\tilde {y},\chi(\tilde {y})).\vspace{-0.125cm}
\end{equation}}
We now show that  \vspace{-0.125cm}
\begin{equation}\label{center-manifold2}
		W^c=\left\lbrace (\tilde y,\tilde z)\left| \hspace{-0.3cm}\begin{matrix}
			&\tilde y=y-y^*, \tilde z=z-z^*, \\& [ y~ z]^{\top}=\yo{\mathcal{T}}\delta, \, [	y^* ~ z^*]^{\top}  = \mathcal{T}  \delta^*,  \delta
			\in\mathcal{Y}_{\delta^*}
		\end{matrix} \right. \right\rbrace
	\end{equation}
is a \Nkl{centre manifold} for \eqref{delta-nonlinear4}.
To show this, note that  \eqref{center-manifold} implies that $W^c$ is  tangential to the \yl{$\tilde y$}-axis in  the neighbourhood 
of 
\yl{$\tilde y=0$.}
Hence we show that $W^c$ in \eqref{center-manifold2} has these properties.
\yl{First, $W^c$ in \eqref{center-manifold2} is invariant because it contains the steady states of \eqref{delta-nonlinear4}, which follows from the fact that $\dot {\tilde \delta}=\mathbf{0}_N$ since $\delta \in \mathcal{Y}_{\delta^*}$
}.
Second, $W^c$ in \eqref{center-manifold2} is tangential to the \yl{$\tilde y$-axis} at \yl{$\tilde y=0$}. To see this, 
for sufficiently small neighbourhood  of the origin
\yl{we want to satisfy\footnote{\yl{Note that we use $S(\tilde y, \tilde z)$ instead of $\chi (\tilde y)$ as we do not have the explicit expression of the latter. However, as $\chi (\tilde y)$ is a part of the steady-state solution of $\mathcal{T}S(\tilde y, \tilde z)$, the desired properties $ \chi(0)=0 $ and $\frac{\partial \chi}{\partial \tilde y}(0)=0$ can be ensured if $S(\tilde y, \tilde z)$ satisfies $ S(0,0)=0 $ and $\frac{\partial S}{\partial \tilde y}(0,0)=0$.}} $S(0,0)=0$ and $\frac{\partial S}{\partial \tilde y}(0,0)=0$. The condition $S(0,0)=0$ trivially holds from \eqref{S}. Using \eqref{S}, to ensure that  $\frac{\partial S}{\partial \tilde y}(0,0)=0$,}
  the Jacobian of \yl{$ f\left(\mathcal{T}^{-1}\begin{bmatrix}\tilde y+y^* \\ \tilde z+z^*\end{bmatrix} \right)-f\left(\yo{\mathcal{T}}^{-1}\begin{bmatrix}y^* \\ z^*\end{bmatrix}\right)=:\hat{S}(\tilde y, \tilde z)$} at 
\yl{$(\tilde y, \tilde z)=(0,0)$} defined by \vspace{-0.125cm}
	\begin{equation*} \small
		\begin{split}
			\frac{\partial \hat{S}(\tilde y ,\tilde z)}{\partial (\tilde y,\tilde z)}\bigg|_{(0,0)}
			&=\left. \begin{bmatrix}
				\frac{\partial \hat{S}_1(\tilde y,\tilde z)}{\partial \tilde y} &\frac{\partial \hat{S}_1(\tilde y,\tilde z)}{\partial \tilde z}\\
				\vdots & \vdots\\
				\frac{\partial \hat{S}_N (\tilde y,\tilde z)}{\partial \tilde y} &\frac{\partial \hat{S}_N(\tilde y,\tilde z)}{\partial \tilde  z}
			\end{bmatrix}\right|_{(0,0)}
		\end{split}
		\vspace{-0.125cm}
\end{equation*}
must be equal to $-k_p\tilde{\mathcal{L}}{\left(\mathcal{T}^{-1}\begin{bmatrix}	y^* \\ z^* \end{bmatrix}\right)}\yo{\mathcal{T}}^{-1}$.
We note that the columns of  the right eigenvector $\mathcal{T}^{-1}$ are given by
\yo{$\mathcal{T}^{-1}=[\text{span}(\mathbf{1}_N), v_2, \ldots, v_N]$ where $v_i \in \mathbb{R}^{n}$, $i=2, . . ., N$ are the column eigenvectors.}
Since \yl{$\tilde{\mathcal{L}}{\left(\mathcal{T}^{-1}\begin{bmatrix}	y^* \\ z^* \end{bmatrix}\right)}\text{span}(\mathbf{1}_N)=\mathbf{0}_N$}, then \yl{$-k_p\tilde{\mathcal{L}}{\left(\mathcal{T}^{-1}\begin{bmatrix}	y^* \\ z^* \end{bmatrix}\right)}\yo{\mathcal{T}}^{-1}$} has a zero first column. This means that the Jacobian entry corresponding to the $y$-component, i.e. \yl{$\left.\begin{bmatrix}
	 \frac{\partial \hat{S}_1(\tilde y, \tilde z)}{\partial \tilde y} \ldots \frac{\partial \hat{S}_N(\tilde y, \tilde z)}{\partial \tilde y}
\end{bmatrix}^{\top}\right|_{(0,0)}=\mathbf{0}_N$},
which shows that $W^c$ is tangent to the $\tilde y$-axis at 
\yl{the origin $(\tilde y, \tilde z)=(0,0)$}.
Therefore, 
for sufficiently small neighbourhood  of the origin
there exists a function \yl{$\tilde z=\chi(\tilde y)$} that satisfies the properties in  \eqref{center-manifold}, \illl{and $W^c$ is a centre manifold. Hence the solutions of \eqref{reduced-manifold} in a sufficiently small neighbourhood of the origin are in $W^c$ \cite[Theorem $18.1.2$]{wiggins2003introduction}. Since $W^c$ is composed of equilibrium points we have
\yl{$\dot{\tilde{y}}=0$},}
thus the reduced system \eqref{reduced-manifold} becomes \yl{$S_1(\tilde y,\chi(\tilde y))=0$}. This implies that \yl{$\tilde y_\chi(t)=\tilde y_\chi(0)\,\forall t$},
where \yl{$\tilde y_\chi(t)$} is the solution of \eqref{reduced-manifold},
\yl{hence \lli{the origin is stable in \eqref{reduced-manifold}}}.
\ill{By applying} 
\cite[Theorem $18.1.3$]{wiggins2003introduction}, 
the  \lli{origin in}  \eqref{reduced-manifold} being stable implies that
\yl{$\tilde y(t)=\tilde y_\chi(t)+w_1(t)$, $\tilde z(t)=\chi(\tilde y_\chi(t))+w_2(t)$ \illl{for some solution $\tilde y_\chi(t)$ in the centre manifold $W^c$, and some}
	\lli{$w_i(t)$ with  $\|w_i(t)\|<k_ie^{-\nkl{\rho_i} t},\,i=1,2,$} \illl{and} $k_i,\nkl{\rho_i}>0$.
Consequently,
$\lim_{t\to\infty}(\tilde y(t),\tilde z(t))=(\tilde y_\chi(0),\chi(\tilde y_\chi(0)))\illl{\in W^c}$ and 
$\lim_{t\to\infty}\tilde \delta(t)=\yo{\mathcal{T}}^{-1}\nkl{[\tilde y_\chi(0)~~\chi(\tilde y_\chi(0))]^\top}$.}

\licc{Therefore all solutions $\delta$ starting  in a sufficiently small  neighbourhood of $\delta^*$ satisfy $\lim_{t\to\infty}\|\delta(t)-\delta^\dagger\|\to0$
	for some $\delta^\dagger\in\mathcal{Y}_{\delta^*}$. The same also holds for solutions starting in a sufficiently small neighbourhood of any point in $\mathcal{Y}_{\delta^*}$, \lli{which follows from the same arguments as the ones that have been used for solutions with initial conditions in a neighbourhood of~$\delta^\ast$}.  Therefore all solutions $\delta$ starting  in a sufficiently small  neighbourhood $\Xi$ of $\mathcal{Y}_{\delta^*}$ satisfy $\lim_{t\to\infty}\|\delta(t)-\delta^\dagger\|\to0$
	for some $\delta^\dagger\in\mathcal{Y}_{\delta^*}$.}

\licc{From the boundedness of trajectories starting in $\Xi$, which can be chosen to be compact, we can also deduce the stability of~$\mathcal{Y}_{\delta^*}$. This hence also implies its asymptotic stability using the convergence property mentioned in the previous paragraph.}
~\hfill\mbox{$\hfill\blacksquare$}

\section{Proof of Proposition \ref{propfreq}} \label{proofsyn-b}


		\yy{Since}  $\sin(\cdot)$ and $\cos(\cdot)$ are real analytic, the right-hand side of \eqref{slow2} is real analytic. For $f_{\kl{i}}:\mathcal{C}(\gamma) \to\mathbb{R}$, let the right-hand side of \eqref{slow2} be $f_{\kl{i}}(\delta)$.
		Then the right-hand side of \eqref{slow2} is injective in the domain $\mathcal{C}(\gamma), \gamma\in[0, \pi/2-\psi_{\max})$,
		that is, $f_{\kl{i}}(\kl{\kappa}^*_1)=f_{\kl{i}}(\kl{\kappa}^*_2)$ if and only if $\kl{\kappa}^*_1=\kl{\kappa}^*_2$. Thus,  $f_{\kl{i}}(\kl{\delta})$  is a one-to-one function for $\kl{\delta}^*\in\mathcal{C}(\gamma), \gamma\in[0, \pi/2-\psi_{\max})$
		 and the uniqueness of the \yl{angle differences in 
		 	 $\mathcal{C}{(\gamma)}$} follows.
		 $\hfill\blacksquare$


\ir{
\section{Proposition \ref{lemL} and its proof}	 	\label{proofJacobian}
}
\begin{prop}[System expanded Laplacian matrix]\label{lemL}
		\yr{Consider system \eqref{slow2} together with \eqref{jacobianL}. For a fixed point $\delta^*\in\mathbb{T}^n$,	
		the Jacobian $\tilde{\mathcal{L}}{(\delta^*)}$ \ir{in \eqref{jacobianL}} is given by}\vspace{-0.125cm}
\begin{equation}
			\label{laplace1}
\hspace{-15mm}\tilde{\mathcal{L}}{(\delta^*)}=\mathcal{B}\text{diag}\left(\sqrt{a^2_{ij}+b^2_{ij}}\,\cos(\delta^*_{ij}-\psi_{ij})\right)\mathcal{B}^{\top}.\vspace{-0.125cm}
		\end{equation}
		 which can be  rewritten   as \vspace{-0.125cm}
\begin{equation} \hspace{-1mm}
			\label{laplace}
			\tilde{\mathcal{L}}{(\delta^*)}=\mathcal{B}\text{diag}(b_{ij}\cos(\delta^*_{ij}))\mathcal{B}^{\top}\hspace{-1mm}+ \mathbf{\hat{B}}\text{diag}(a_{ij}\sin(\delta^*_{ij}))\mathcal{B}^{\top} \vspace{-0.125cm}
		\end{equation}
	
		Moreover, if $\delta^\ast$ \yr{is an equilibrium point \ir{that}} satisfies
		$\delta^*\in\mathcal{C}(\gamma)$ \ill{for some} $\gamma\in[0,\pi/2-\psi_{\max})$ \jo{with $\psi_{\max}=\max_{(i,j)\in\mathcal{E}} \arctan(G_{ij}/B_{ij})$}, then
		$\tilde{\mathcal{L}}{(\delta^*)}$ is a {non-symmetric\footnote{\yr{\ir{Note that form} \eqref{laplace} \ym{\ir{illustrates} the structure of} \eqref{laplace1} where   $\mathcal{B}\text{diag}(b_{ij}\cos(\delta^*_{ij}))\mathcal{B}^{\top}$ is the lossless symmetric part, while the lossy non-symmetric part is $\mathbf{\hat{B}}\text{diag}(a_{ij}\sin(\delta^*_{ij}))\mathcal{B}^{\top}$.
			For  lossless networks, $\tilde{\mathcal{L}}\kl{(\delta^*)}$ reduces to the symmetric part since $a_{ij}=0$ ($G_{ij}=0$, $\psi_{ij}=0$).}}} Laplacian
		{matrix.}
	\end{prop}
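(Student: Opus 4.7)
The plan is to obtain both matrix forms by a direct entry-wise computation of the Jacobian, and then to deduce the non-symmetric Laplacian property from the bound on $\delta^\ast_{ij}$ implied by $\delta^\ast \in \mathcal{C}(\gamma)$.

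First I would differentiate $H_i$ in \eqref{H}. Since each summand depends on $\delta$ only through $\delta_{ij}=\delta_i-\delta_j$, a one-line calculation yields, with all non-neighbour entries zero,
\[
\tilde{\mathcal{L}}_{ii}(\delta^\ast)=\!\!\sum_{k\in\mathcal{N}_i}\!\bigl(b_{ik}\cos(\delta^\ast_{ik})+a_{ik}\sin(\delta^\ast_{ik})\bigr),\ \ \tilde{\mathcal{L}}_{ij}(\delta^\ast)=-\bigl(b_{ij}\cos(\delta^\ast_{ij})+a_{ij}\sin(\delta^\ast_{ij})\bigr),\ j\in\mathcal{N}_i.
\]
The trigonometric identity $b\cos\theta+a\sin\theta=\sqrt{a^2+b^2}\,\cos(\theta-\arctan(a/b))$, applied with $\psi_{ij}=\arctan(a_{ij}/b_{ij})$, repackages each edge weight as $\sqrt{a_{ij}^2+b_{ij}^2}\,\cos(\delta^\ast_{ij}-\psi_{ij})$, giving \eqref{laplace1} once one checks the incidence-matrix representation below.

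Next I would verify that the two incidence-matrix expressions reproduce exactly these entries. The lossless weight $b_{ij}\cos(\delta^\ast_{ij})$ is symmetric under an edge-orientation flip (since $\cos$ is even and $b_{ij}=b_{ji}$), so the standard weighted-Laplacian identity gives the first summand of \eqref{laplace} as $\mathcal{B}\,\text{diag}(b_{ij}\cos(\delta^\ast_{ij}))\,\mathcal{B}^{\top}$. The lossy weight $a_{ij}\sin(\delta^\ast_{ij})$ is instead antisymmetric under orientation flip (since $\sin$ is odd and $a_{ij}=a_{ji}$). For a single edge $z$ between $i$ and $j$ one verifies that $\mathbf{\hat{B}}_{iz}\mathcal{B}_{iz}=\mathcal{B}_{iz}$ and $\mathbf{\hat{B}}_{iz}\mathcal{B}_{jz}=-1$, both independent of which endpoint is taken as the source, so that $\mathbf{\hat{B}}\,\text{diag}(a_{ij}\sin(\delta^\ast_{ij}))\,\mathcal{B}^{\top}$ contributes $+a_{ij}\sin(\delta^\ast_{ij})$ to entry $(i,i)$ and $-a_{ij}\sin(\delta^\ast_{ij})$ to entry $(i,j)$ for each $j\in\mathcal{N}_i$. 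Summing over edges recovers \eqref{laplace}, and merging the two diagonal matrices via the cosine identity above yields \eqref{laplace1}.

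Finally, the non-symmetric Laplacian property splits into two parts. The zero row-sum condition $\tilde{\mathcal{L}}_{ii}(\delta^\ast)+\sum_{j\in\mathcal{N}_i}\tilde{\mathcal{L}}_{ij}(\delta^\ast)=0$ is immediate from the entry formulas. The sign conditions $\tilde{\mathcal{L}}_{ii}(\delta^\ast)>0$ and $\tilde{\mathcal{L}}_{ij}(\delta^\ast)<0$ for $j\in\mathcal{N}_i$ both reduce to $\cos(\delta^\ast_{ij}-\psi_{ij})>0$ on every edge. The hypothesis $\delta^\ast\in\mathcal{C}(\gamma)$ with $\gamma\in[0,\pi/2-\psi_{\max})$ gives $|\delta^\ast_{ij}|\leq\gamma<\pi/2-\psi_{\max}$, and combined with $0<\psi_{ij}\leq\psi_{\max}$ this bounds $|\delta^\ast_{ij}-\psi_{ij}|<\pi/2$, delivering positivity and hence the stated sign pattern. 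The main care-requiring step is the orientation bookkeeping in the lossy block: the mixing of $\mathbf{\hat{B}}$ and $\mathcal{B}$ in \eqref{laplace} is precisely what cancels the odd-in-$\delta_{ij}$ behaviour of $a_{ij}\sin(\delta^\ast_{ij})$, so one must check that the resulting matrix is independent of the arbitrary reference direction assigned to each edge. Everything else reduces to routine differentiation and an elementary cosine-positivity estimate.
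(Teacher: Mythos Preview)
Your proof is correct and follows essentially the same route as the paper: entry-wise differentiation of $H_i$, recognition of the incidence-matrix structure (using the even/odd behaviour of the cosine and sine weights under orientation flip), and the sign conclusion from $|\delta^\ast_{ij}-\psi_{ij}|<\pi/2$. The only cosmetic difference is that the paper obtains the compact edge weight $\sqrt{a_{ij}^2+b_{ij}^2}\cos(\delta^\ast_{ij}-\psi_{ij})$ by first passing through the equivalent sine representation of Lemma~\ref{altmodel} rather than invoking the identity $b\cos\theta+a\sin\theta=\sqrt{a^2+b^2}\cos(\theta-\arctan(a/b))$ directly as you do, and it is less explicit than you are about the $\mathbf{\hat{B}}$ orientation bookkeeping.
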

\begin{proof}
		The proof makes use of  Lemma \ref{altmodel} in Appendix~\ref{proofaltmodel}. Lemma \ref{altmodel}  shows   that \eqref{slow2} is equivalent to
			\begin{equation} \hspace{-0.4cm}\label{slowsine1}
			\dot{\delta}_i=k_{pi}\varpi_i- k_{pi} \hspace{-0.1cm}\sum_{ j\in\mathcal{N}_i}
			\sqrt{a^2_{ij}+b^2_{ij}}\,\sin(\delta_{ij}-\psi_{ij}), \hspace{2mm} \forall i\in\mathcal{V} \vspace{-0.125cm}	\end{equation}
		where \mbox{$\psi_{ij}=\arctan(a_{ij}/b_{ij})=\arctan(G_{ij}/B_{ij})$}.
		\yr{

\ir{We now write} \eqref{slow2}, \eqref{slowsine1}}
as $\dot\delta_i=-k_{pi} H_i(\delta)$ 
\ir{as in \eqref{H} where}
\begin{align}
	H_i(\delta)&:= \varpi_i+ \mathsmaller{\sum}_{ j\in\mathcal{N}_i}(b_{ij}\sin(\delta_{ij})-a_{ij}\cos(\delta_{ij})),				\label{slow6}\\
	&=\varpi_i+  \mathsmaller{\sum}_{ j\in\mathcal{N}_i}\sqrt{a^2_{ij}+b^2_{ij}}\,\sin(\delta_{ij}-\psi_{ij}).\label{slowsine2}\vspace{-0.125cm}
\end{align}	
Considering \eqref{slow6},
we have the diagonal and off-diagonal entries of $\tilde{\mathcal{L}}{(\delta^*)}$ as\vspace{-0.125cm}		
\yo{\begin{equation}\label{hd2}				
	\begin{split}
		&\tilde{\mathcal{L}}_{ii} {(\delta^*)}=	 \mathsmaller{\sum}_{j\in\mathcal{N}_i}b_{ij}\cos(\delta^*_{ij})+ \mathsmaller{\sum}_{ j\in\mathcal{N}_i}a_{ij}\sin(\delta^*_{ij})\vspace{-0.125cm}\\
		&\tilde{\mathcal{L}}_{ij} {(\delta^*)}=-b_{ij}\cos(\delta^*_{ij})-a_{ij}\sin(\delta^*_{ij})~~~~~~~~ j\in\mathcal{N}_i.
	\end{split}\vspace{-0.125cm}
\end{equation}			
Similarly, we obtain by considering~\eqref{slowsine2}	\vspace{-0.125cm}	
\begin{equation}\label{hd1}				
	\begin{split}
		&\tilde{\mathcal{L}}_{ii} {(\delta^*)}=	 \mathsmaller{\sum}_{ j\in\mathcal{N}_i}\sqrt{a^2_{ij}+b^2_{ij}}\,\cos(\delta^*_{ij}-\psi_{ij}) 
\\
		&\tilde{\mathcal{L}}_{ij} {(\delta^*)}=-\sqrt{a^2_{ij}+b^2_{ij}}\,\cos(\delta^*_{ij}-\psi_{ij})~~~~~~ j\in\mathcal{N}_i.
	\end{split}\vspace{-0.125cm}
\end{equation}}\yo{Noting that} $a_{ji}=a_{ij}$, $b_{ji}=b_{ij}$,
$\cos(\delta^*_{ji})=\cos(\delta^*_{ij})$, $\sin(\delta^*_{ji})=-\sin(\delta^*_{ij})$, \ill{using} 
\eqref{hd2} for all the entries of \eqref{jacobianL} gives the form~\eqref{laplace}. Likewise, \ill{using} 
\eqref{hd1} for all the entries of~\eqref{jacobianL} and separating the  $\cos(\cdot)$ and $\sin(\cdot)$ terms gives the   form~\eqref{laplace1}. 	
This completes the first part.

Note that 	$\tilde{\mathcal{L}}_{ii}\kl{(\delta^*)}+ \mathsmaller{\sum}_{j\in\mathcal{N}_i}\tilde{\mathcal{L}}_{ij}\kl{(\delta^*)}=0, \, \forall i\in\mathcal{V}$ which shows that 	$\tilde{\mathcal{L}}\kl{(\delta^*)}\mathbf{1}_N=\mathbf{0}_N$.
Consider now
$\delta^*\in\mathcal{C}(\gamma)$, $\gamma\in[0,\pi/2-\psi_{\max})$; \ill{then} we have   $\tilde{\mathcal{L}}_{ii}\kl{(\delta^*)}>0,\,\forall i\in\mathcal{V}$ and  $\tilde{\mathcal{L}}_{ij}\kl{(\delta^*)}<0,\,\forall j\in\mathcal{N}_i$.
Furthermore,
\lic{we have that $\tilde{\mathcal{L}}_{ii}\kl{(\delta^*)}=- \mathsmaller{\sum}_{j\in\mathcal{N}_i}\tilde{\mathcal{L}}_{ij}\kl{(\delta^*)}$ hence $\tilde{\mathcal{L}}\kl{(\delta^*)}$ is a    Laplacian.}
The fact that $\tilde{\mathcal{L}}{(\delta^*)}^{\top}\neq \tilde{\mathcal{L}}{(\delta^*)}$ since $\hat {\mathbf{B}}^{\top}\neq \mathcal{B}^{\top}$	 shows that $\tilde{\mathcal{L}}{(\delta^*)}$ is   a non-symmetric Laplacian~matrix.	
\end{proof}

\yr{
\section{Lemma \ref{altmodel} and its proof}	 	\label{proofaltmodel}}
\khlb{\begin{lem}[Sine representation]
	\label{altmodel}
	System \eqref{slow2} \li{has the following equivalent representation}
	\begin{equation*} \hspace{-0.4cm} 
		\dot{\delta}_i=k_{pi}\varpi_i- k_{pi} \hspace{-0.1cm}\sum_{ j\in\mathcal{N}_i}
		\sqrt{a^2_{ij}+b^2_{ij}}\,\sin(\delta_{ij}-\psi_{ij}), \hspace{2mm} \forall i\in\mathcal{V} 
\end{equation*}
	where \mbox{$\psi_{ij}=\arctan(a_{ij}/b_{ij})=\arctan(G_{ij}/B_{ij})$}.
\end{lem}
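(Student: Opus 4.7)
The plan is to prove this by a direct application of the harmonic addition identity (i.e. the sinusoidal representation of a linear combination of $\sin$ and $\cos$ with the same argument). Since the two systems differ only in the form of each pairwise interaction term, it suffices to show that, for each $(i,j)\in\mathcal{E}$, one has the pointwise identity
\begin{equation*}
b_{ij}\sin(\delta_{ij})-a_{ij}\cos(\delta_{ij}) \;=\; \sqrt{a_{ij}^2+b_{ij}^2}\,\sin(\delta_{ij}-\psi_{ij})
\end{equation*}
with $\psi_{ij}=\arctan(a_{ij}/b_{ij})$. Summing this identity over $j\in\mathcal{N}_i$, multiplying by $-k_{pi}$, and adding $k_{pi}\varpi_i$ then transforms \eqref{slow2} into the stated representation.

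First I would expand the right-hand side using the angle-subtraction formula,
\begin{equation*}
\sqrt{a_{ij}^2+b_{ij}^2}\,\sin(\delta_{ij}-\psi_{ij})
= \sqrt{a_{ij}^2+b_{ij}^2}\bigl(\sin(\delta_{ij})\cos\psi_{ij}-\cos(\delta_{ij})\sin\psi_{ij}\bigr).
\end{equation*}
Then, using the definition $\psi_{ij}=\arctan(a_{ij}/b_{ij})$ together with $a_{ij},b_{ij}>0$ (so that $\psi_{ij}\in(0,\pi/2)$ and the principal branch of $\arctan$ is unambiguous), I would substitute
\begin{equation*}
\cos\psi_{ij}=\frac{b_{ij}}{\sqrt{a_{ij}^2+b_{ij}^2}},\qquad \sin\psi_{ij}=\frac{a_{ij}}{\sqrt{a_{ij}^2+b_{ij}^2}}.
\end{equation*}
The $\sqrt{a_{ij}^2+b_{ij}^2}$ factors cancel, leaving exactly $b_{ij}\sin(\delta_{ij})-a_{ij}\cos(\delta_{ij})$, which establishes the identity. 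Finally, recalling from \eqref{Pn} that $a_{ij}=V_iV_jG_{ij}$ and $b_{ij}=V_iV_jB_{ij}$, the ratio simplifies as $a_{ij}/b_{ij}=G_{ij}/B_{ij}$, yielding $\psi_{ij}=\arctan(G_{ij}/B_{ij})$ as claimed.

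There is no substantive obstacle: the statement is a purely algebraic/trigonometric rewriting, and the only mild care required is to ensure that $\psi_{ij}$ lies in the principal range of $\arctan$, which follows from the positivity $a_{ij},b_{ij}\in\mathbb{R}_{>0}$ noted after \eqref{Pn}. Consequently, the equivalence of the two forms of the dynamics is immediate from the pointwise identity above.
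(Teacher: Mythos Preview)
Your proof is correct and follows essentially the same approach as the paper: both arguments use the harmonic addition identity with $\cos\psi_{ij}=b_{ij}/\sqrt{a_{ij}^2+b_{ij}^2}$ and $\sin\psi_{ij}=a_{ij}/\sqrt{a_{ij}^2+b_{ij}^2}$, differing only in that you expand the sine form and match it to the original, whereas the paper factors the original expression and collapses it into the sine form. Your explicit remark about $a_{ij},b_{ij}>0$ placing $\psi_{ij}$ in the principal range of $\arctan$ is a nice touch that the paper leaves implicit.
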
}
\begin{proof}
Considering  \eqref{slow2}, we rewrite
\mbox{$P_{ij}\lli{:=}b_{ij}\sin(\delta_{ij})-a_{ij}\cos(\delta_{ij})$} as ${\scriptsize P_{ij}= \left(\frac{b_{ij}\sin(\delta_{ij})}{\sqrt{a^2_{ij}+b^2_{ij}}}-\frac{a_{ij}\cos(\delta_{ij})}{\sqrt{a^2_{ij}+b^2_{ij}}}\right)\sqrt{a^2_{ij}+b^2_{ij}}}$.
Let $\scriptsize{\cos(\psi_{ij})=\frac{b_{ij}}{\sqrt{a^2_{ij}+b^2_{ij}}}}$, $\scriptsize{\sin(\psi_{ij})}=\frac{a_{ij}}{\sqrt{b^2_{ij}+b^2_{ij}}}$, thus $\scriptsize{\psi_{ij}=\arctan(\frac{\sin(\psi_{ij})}{\cos(\psi_{ij})})}=\arctan(G_{ij}/B_{ij})$.
Then ${\scriptsize P_{ij}=\sqrt{a^2_{ij}+b^2_{ij}}\,\left(\cos(\psi_{ij})\sin(\delta_{ij})-\sin(\psi_{ij})\cos(\delta_{ij})\right)}$~or 	${\scriptsize P_{ij}=}$${\sqrt{a^2_{ij}+b^2_{ij}}\,\sin(\delta_{ij}-\psi_{ij})}$
which \lli{transforms} 
\eqref{slow2} \lli{into}~$\dot{\delta}_i$ of Lemma~\ref{altmodel}. \end{proof}

\bibliographystyle{IEEEtran}
\bibliography{bib_inverterbased}

\end{document}